\theoremstyle{plain}
 \newtheorem{thm}{Theorem}[section]
\newtheorem{thm*}{Theorem}
 \newtheorem{prop}[thm]{Proposition}
 \numberwithin{equation}{section} %% Comment out for sequentially-numbered
\numberwithin{figure}{section} %% Comment out for sequentially-numbered
 \theoremstyle{plain}
 \theoremstyle{definition}
 \newtheorem{defn}[thm]{Definition}
 \newtheorem{rem}[thm]{Remark}
\newcommand{\calA}{{{\mathcal A}}}
\newcommand{\fC}{{{\mathfrak C}}}
\newcommand{\calS}{{\mathcal{S}}}
\newcommand{\calB}{{{\mathcal B}}}
\newcommand{\calL}{{{\mathcal L}}}
\newcommand{\C}{{{\mathbb C}}}
\newcommand{\R}{{{\mathbb R}}}
\newcommand{\bc}{{{\bf c}}}
\newcommand{\bp}{{{\bf p}}}
\newcommand{\bu}{{{\bf u}}}
\newcommand{\cC}{{{\mathcal C}}}
\newcommand{\cm}{{{\mathfrak m}}}
\newcommand{\h}{{{\mathfrak H}}}
\begin{document}

\title[Revolution rings]{Modulus of revolution rings in the heisenberg group}

\author[I.D. Platis ]{Ioannis D. Platis}

\email{jplatis@math.uoc.gr}
\address{Department of Mathematics and Applied Mathematics\\
 University of Crete
\\ University Campus\\
GR 700 13 Voutes Heraklion Crete\\Greece}

\keywords{Heisenberg group, surfaces of revolution, modulus of curve families.\\
{\it 2010 Mathematics Subject Classification:} 30L05, 30C75.}
\begin{abstract}
Let $\calS$ be a surface of revolution embedded in the Heisenberg group $\h$. A revolution ring $R_{a,b}(\calS)$, $0<a<b$, is a domain in  $\h$ bounded by two dilated images of $\calS$,  with dilation factors $a$ and $b$, respectively.
We prove that if $\calS$ is subject to certain geometric conditions, then the modulus of the family $\Gamma$ of horizontal boundary connecting curves inside $R_{a,b}(\calS)$ is
$$
{\rm Mod}(\Gamma)=\pi^2(\log(b/a))^{-3}.
$$
Our result applies for many interesting surfaces, e.g., the Kor\'anyi metric sphere, the Carnot-Carath\'eodory metric sphere and the bubble set.
\end{abstract}

\maketitle
%\tableofcontents
\section{Introduction}
The Heisenberg group $\h$ is the set $\C\times\R$ with multiplication $*$ given by
$$
(z,t)*(w,s)=(z+w,t+s+2\Im(z\overline{w})),
$$
for every $(z,t)$ and $(w,s)$ in $\h$. With this multiplication $\h$ becomes a 2-step nilpotent Lie group and constitutes the primary model for sub-Riemannian geometry. Besides the horizontal (Carnot-Carath\'eodory) metric $d_{cc}$, there is another metric $d_\h$ defined in $\h$, which is called the Kor\'anyi metric. This is not a path metric; nevertheless it is bi-Lipschitz equivalent to $d_{cc}$. Let $\Omega\subset\h$ be a domain and $\Gamma$ be a family of smooth rectifiable curves with respect to the horizontal metric, lying in $\Omega$. Then the {\it modulus of $\Gamma$} is defined by
$$
{\rm Mod}(\Gamma)=\inf_{\rho\in {\rm Adm}(\Gamma)}\iiint_\h \rho^4 d\cm^3.
$$
Here, $d\cm^3$ is the Lebesgue measure in $\C\times\R$ and ${\rm Adm}(\Gamma)$ is the set of positive Borel functions of $\Omega$ which are such that their horizontal line integral with respect to each $\gamma\in\Gamma$ is greater or equal to 1, see Section \ref{sec:mod} for more details. The function $\rho_0$ which realises the infimum above is called an extremal density for $\Gamma$. Moduli of curve families inside domains of $\h$ are quite important in the theory of quasiconformal mappings of the Heisenberg group, see for instance \cite{KR2}. Calculation of moduli of curve families and their respective extremal densities in domains of $\h$ lead to moduli methods which have been proved a quite powerful tool for the solution of extremal problems, see for instance \cite{BFP}. This idea goes back to Gr\"otzsch and the initiation of study of extremal problems in the complex plane. But in contrast to the complex plane situation and due to the lack of a Riemann mapping theorem, there can be no neat 
normalisation for an arbitrary domain in $\h$; therefore, to calculate the modulus of a curve family $\Gamma$ inside a domain $\Omega\subset\h$ is not in principle an easy task. In their paper \cite{KR}, Kor\'anyi and Reimann considered the ring $R_{a,b}$, $0<a<b$; this is the domain between two $d_\h$-metric spheres of radii $a$ and $b$ respectively. Using a particular set of coordinates for $\h$ they showed that the modulus of the family of boundary connecting curves, otherwise known as the {\it conformal capacity of $R_{a,b}$}, is 
$$
{\rm Mod}(\Gamma)=\pi^2(\log(b/a))^{-3},
$$
with extremal density 
$$\rho_0(z,t)=\frac{1}{\log(b/a))}\frac{|z|}{(|z|^4+t^2)^{1/2}}\mathcal{X}(R_{a,b}).$$ Here, $\mathcal{X}(R_{a,b})$ is the characteristic function of $R_{a,b}$. 

In this paper we consider a {\it revolution ring} $R_{a,b}(\calS)$. This is constructed as follows: Consider a $\cC^2$ curve $\bp$ lying on the $xt$-plane such that its closure has endpoints lying on the $t$-axis. We also impose sume plausible geometric conditions on $\bp$, see Conditions (\ref{A1}) and (\ref{A2}) in Section \ref{sec:surfrevg} and Condition (\ref{eq:betacond}) in Section \ref{sec:revring}. Let $\calS$ be the $\cC^2$ surface of revolution of $\bp$ around the $t$-axis and denote by $\overline{\calS}$ the compact set comprising $\calS$ and the boundary points of $\bp$. Then $R_{a,b}(\calS)$ is a domain in $\h$ bounded by two dilated images $D_a(\overline{\calS})$ and $D_b(\overline{\calS})$, $0<a<b$. Denote by $\Gamma$ the family of horizontal curves which lie inside $R_{a,b}(\calS)$ and connect its boundary components and let also ${\rm Mod}(\Gamma)$ be the modulus of $\Gamma$. 
Then according to (1) of Theorem \ref{thm:main1} we have that 
$$
{\rm Mod}(\Gamma)\ge \pi^2(\log(b/a))^{-3}=\iiint_\h\rho_0^4(z,t)d\cm^3(z,t),
$$
where
$$
\rho_0(z,t)=\frac{1}{\log(b/a))}\frac{|z|}{(|z|^4+t^2)^{1/2}}\mathcal{X}(R_{a,b}(\calS)).
$$
%Next, we assume that the position of the boundary points of the profile curve $\bp$ of $\calS$ is neat enough, e.g., they are symmetric with respect to the origin, see also the comments following the statemant of the theorem. Notice that this case includes many interesting surfaces; we mention the Kor\'anyi sphere, the Carnot-Carath\'eodory sphere and the bubble set, see Section \ref{sec:examples}. Then 
Moreover, from (2) of Theorem \ref{thm:main1} we have $${\rm Mod}(\Gamma)= \pi^2(\log(b/a))^{-3},$$
and thus $\rho_0$ is an extremal density for $\Gamma$.

We finally remark that to prove Theorem \ref{thm:main1} we extensively use {\it revolution coordinates} for $\h$ which we define on Section \ref{sec:revring}. These generalise the logarithmic coordinates for $\h$, see \cite{BFP}. 

The paper is organised as follows. In Section \ref{sec:prel} we review some known facts about the Heisenberg group. In Section \ref{sec:surfrev} we deal with $\cC^2$ surfaces of revolution embedded in $\h$ and their horizontal geometry. In Section \ref{sec:revring} we define revolution coordinates; finally, in Section \ref{sec:mod1} we prove Theorem \ref{thm:main1}.
\section{Preliminaries}\label{sec:prel}
Most of the material included in this section is well known. In Section \ref{sec:heis} we review in brief the Kor\'anyi-Cygan metric structure  of the Heisenberg group. In Section \ref{sec:con-leg} we describe its Lie group structure, its contact and its sub-Riemannian geometry. For further details we refer the reader for instance to \cite{CDPT}. The definition of modulus of families of rectifiable curves inside a domain of the Heisenberg group is in Section \ref{sec:mod}. 
Finally, a brief review of $\cC^2$ surfaces embedded in the Heisenberg group is given in Section \ref{sec:surf}; for this, we follow \cite{P}.
\subsection{Heisenberg group}\label{sec:heis}
Let $\h$ be the Heisenberg group as defined in the introduction.
The Kor\'anyi map $\alpha:\h\to \C$ is given by
$$
\alpha(z,t)=-|z|^2+it.
$$
From this we deduce the Kor\'anyi gauge $|\cdot|_\h$, which is defined by 
$$
\left|(z,t)\right|_\h=|\alpha(z,t)|^{1/2}=\left| |z|^2-it\right|^{1/2},
$$
for every $(z,t)\in\h$. The {\it Kor\'anyi--Cygan}  metric $d_\h$ is then defined by the relation
$$
d_\h\left((z_1,t_1),\,(z_2,t_2)\right)
=\left|(z_1,t_1)^{-1}*(z_2,t_2)\right|.
$$
The $d_\h$-sphere of radius $R>0$ and centred at the origin is the {\it Kor\'anyi sphere}
$$
\calS_\h(R)=\{(z,t)\in\h\;|\;\left|(z,t)\right|_\h=R\}.
$$
The metric $d_\h$ is invariant under {\it left translations} and {\it rotations around the vertical axis} $\mathcal{V}=\{0\}\times\R$. Left translations $T_{(\zeta,s)}$, $(\zeta,s)\in\h$, are defined by
 $
 T_{(\zeta,s)}(z,t)=(\zeta,s)*(z,t),
 $
 and rotations $R_\theta$, $\theta\in\R$, about the vertical axis $\mathcal{V}$ are defined by 
 $
 R_\theta(z,t)=(ze^{i\theta},t),
 $
 for every $(z,t)\in\h$.
Left translations are left actions of $\h$ onto itself and rotations are induced by an action of ${\rm U}(1)$ on $\h$; together they form the group ${\rm Isom}^+(\h,d_\h)$ of (orientation-preserving) {\it Heisenberg isometries}. Any other isometry of $d_\h$ is the composition of an element  of  ${\rm Isom}^+(\h,d_\h)$ with conjugation $j:(z,t)\mapsto(\overline{z},-t)$.
We also consider two other kinds of transformations, namely {\it dilations} and {\it inversion}. Dilations $ D_\delta$, $\delta>0$, are defined by
 $
 D_\delta(z,t)=(\delta z, \delta^2 t),
 $
for every $(z,t)\in\h$. One may show that 
the metric $d_\h$ is  scaled up to multiplicative constants by the action of  dilations. 
Finally, inversion $I$ is defined in $\h\setminus\{(0,0)\}$ by 
$
I(z,t)=\left(z\left(\alpha(z,t)\right)^{-1},
-t\left|\alpha(z,t)\right|^{-2}\right).
$
Compositions of orientation-preserving Heisenberg isometries, dilations and inversion form the group  ${\rm Sim}^+(\h)$ of orientation-preserving {\it similarities} of $\h$. The  {\it similarity group} ${\rm Sim}(\h)$ of $\h$ is the group comprising elements of ${\rm Sim}^+(\h)$ followed by conjugation $j$.  

\subsection{Contact and sub-Riemannian geometry of $\h$}\label{sec:con-leg}
The Heisenberg group $\h$ is a (two-step nilpotent) Lie group with underlying manifold $\R^2\times\R$. Consider the left invariant vector fields
\begin{eqnarray*}
X=\frac{\partial}{\partial x}+2y\frac{\partial}{\partial t},\quad Y=\frac{\partial}{\partial y}-2x\frac{\partial}{\partial t},
\quad T=\frac{\partial}{\partial t}.
\end{eqnarray*}
These form a basis or the Lie algebra of left invariant vector fields of $\h$. Moreover, $\mathfrak{h}$ has a grading $\mathfrak{h} = \mathfrak{v}_1\oplus \mathfrak{v}_2$ with
\begin{displaymath}
\mathfrak{v}_1 = \mathrm{span}_{\R}\{X, Y\}\quad \text{and}\quad \mathfrak{v}_2=\mathrm{span}_{\R}\{T\}.
\end{displaymath}
The {\it contact structure}  of $\h$ is induced by a 1-form $\omega$ of $\h$; this is defined as the unique 1-form which is such that $X,Y\in{\rm ker}\omega$, $\omega(T)=1$. In Heisenberg coordinates $z=x+iy,t,$ the contact form $\omega$ is given by
\begin{eqnarray*}
\omega=dt+2(xdy-ydx)=dt+2\Im(\overline{z}dz).
\end{eqnarray*}
Uniqueness of $\omega$ (modulo change of coordinates) follows by the contact version of Darboux's Theorem. The {\it sub-Riemannian geometry} of $\h$ is given by the distribution which is defined by the first layer $\mathfrak{v}_1$; this is the {\it horizontal distribution}. At each point $p\in\h$, $(\mathfrak{v}_1)_p={\rm H}_p(\h)$ is the horizontal tangent space of $\h$ at $p$. The sub-Riemannian metric $\langle\cdot,\cdot\rangle$ is given in the horizontal bundle by the relations
\begin{equation*}
\langle X,X\rangle=\langle Y,Y\rangle=1,\quad \langle X,Y\rangle=\langle Y,X\rangle=0,
\end{equation*}
and the induced norm shall be denoted by $\|\cdot\|$.
An absolutely continuous curve $\gamma:[a,b]\to \h$ (in the Euclidean sense) with 
\begin{displaymath}
\gamma(\tau)=(\gamma_h(\tau),\gamma_3(\tau))\in\mathbb{C}\times \mathbb{R},
\end{displaymath}
 is called {\it horizontal} if $\dot{\gamma}(\tau)\in {\rm H}_{\gamma(\tau)}(\h)$ for almost all $\tau\in [a,b]$; equivalently,
 \begin{displaymath}
 \dot t(\tau)=-2\Im\left(\overline{z(\tau)}\dot z(\tau)\right),
\end{displaymath}
for almost all $\tau\in [a,b]$.
A curve $\gamma:[a,b]\to \h$ is absolutely continuous with respect to $d_\h$ if and only if it is a horizontal curve. Moreover,
the horizontal length of a smooth rectifiable curve $\gamma=(\gamma_h,\gamma_3)$ with respect to $\|\cdot\|$ is given by the integral over the (Euclidean) norm of the horizontal part of the tangent vector,
\begin{displaymath}
 \ell_h(\gamma)=\int_a^b \|\dot{\gamma}_h(\tau)\|\;d\tau=\int_a^b\left(\langle\dot\gamma(\tau),X_{\gamma(\tau)}\rangle^2+\langle\dot\gamma(\tau),Y_{\gamma(\tau)}\rangle^2\right)^{1/2}d\tau.
\end{displaymath}
Thus the {\it Carnot-Carath\'eodory distance} of two arbitrary points $p,q\in\h$ is
$$
d_{cc}(p,q)=\inf_\gamma\ell_h(\gamma),
$$ 
where $\gamma$ is horizontal and joins $p$ and $q$. The $d_{cc}$-sphere of radius $R$ and centred at the origin is the {\it Carnot-Carath\'eodory sphere} $\calS_{cc}(R)$ and can be constructed as follows. Consider the family ${\bf c}_k$, $k\in\R$, of planar circles 
$$
{\bf c}_k(s)=\frac{1}{k}(1-e^{iks}),\quad s\in[0,2\pi/|k|].
$$
In the case $k=0$, circles are degenerated into straight line segments. For every $k$, ${\bf c}_k$ is lifted to the horizontal curve $\bp_0^k$,
$$
\bp_0^k(s)=\left(\bc_k(s),t_k(s)\right),\quad t_k(s)=\frac{2}{k}\left(\frac{1}{k}\sin(ks)-s %+\frac{\pi}{k}
\right).
$$
Denote by $\bp_\phi^k$ the rotation of $\bp_0^k$ around the vertical axis $\mathcal{V}$. Then
$$
\calS_{cc}(R)=\bp_\phi^k(R),\;k\in[-2\pi/R,2\pi/R],\;\phi\in[0,2\pi].
$$
%In comparison, the Kor\'anyi-Cygan metric and the Carnot-Carath\'eodory metric are equivalent metrics, they generate the same infinitesimal structure in $\h$, their isometry groups are the same, but not their similarity groups: inversion is not a similarity of $d_{cc}$.
\subsection{Modulus of curve families}\label{sec:mod}
Let $\Gamma$ be a family of rectifiable curves which lie inside a domain $\Omega$ of $\h$. A positive Borel function $\rho$ defined in $\Omega$ is called {\it admissible} for $\gamma$ if
$$
\int_\gamma\rho ds^h\ge 1,
$$
for all $\gamma\in\Gamma$. The curve integral on the left is defined as follows: if $\gamma(\tau)=\left(\gamma_h(\tau),\gamma_3(\tau)\right)$, $\tau\in [a,b]$, then
$$
\int_\gamma\rho ds^h=\int_a^b\rho\left(\gamma(\tau)\right)\|\dot\gamma_h(\tau)\|d\tau.
$$
Denote by ${\rm Adm}(\Gamma)$ the set of all admissible functions for $\Gamma$. Then the modulus ${\rm Mod}(\Gamma)$ of $\Gamma$ is
$$
{\rm Mod}(\Gamma)=\inf_{\rho\in {\rm Adm}(\Gamma)}\iiint_\h \rho^4 d\cm^3,
$$
where $d\cm^3$ is the Lebesgue measure in $\C\times\R$. Modulus ${\rm Mod}(\Gamma)$ is invariant by the action of the similarity group ${\rm Sim}(\h)$. By Theorem 8 in \cite{KR1}, the elements of  ${\rm Sim}(\h)$ are exactly the 1-quasiconformal self-mappings (both orientation-preserving and orientation-reversing) of $\h$. Therefore if $g\in{\rm Sim}(\h)$ then ${\rm Mod}\left(g(\Gamma)\right)=$ ${\rm Mod}(\Gamma)$, where $g(\Gamma)$ is the image curve family of horizontal curves lying inside $\Omega'=g(\Omega)$. This result is a consequence of the Modulus Inequality which holds for quasiconformal mappings of $\h$; for further details see for instance \cite{BFP} and the references therein.

\subsection{$\cC^2$ surfaces in the Heisenberg group}\label{sec:surf}
We list below some basic features of $\cC^2$ surfaces in the Heisenberg group. Such a surface is primarily a regular surface in $\R^3$, that is, a countable collection of surface patches $\sigma_\alpha:U_\alpha\to V_\alpha$, where $U_\alpha$ and $V_\alpha$ are open sets of $\R^2$ and $\R^3$, respectively, such that:
\begin{enumerate}
\item Each $\sigma_\alpha$ is a $\cC^2$ homeomorphism.
\item The differential $(\sigma_\alpha)_*$ is everywhere of rank 2.
\end{enumerate}
Let $p$ be an arbitrary point of $\calS$ and suppose that $\sigma:U\to\R^3$,
$$
\sigma(u,v)=\left(x(u,v),y(u,v),t(u,v)\right),
$$
is a surface patch of $\calS$ such that $p=\sigma(\bu_0)$ for some $\bu_0=(u_0,v_0)\in U$. The tangent plane $T_p(\calS)$ of $\calS$ at $p$ is spanned by the vectors
\begin{eqnarray*}
&&
(\sigma_u)_p=x_u({\bu_0})\left(\frac{\partial}{\partial x}\right)_p+y_u({\bu_0})\left(\frac{\partial}{\partial y}\right)_p+t_u({\bu_0})\left(\frac{\partial}{\partial t}\right)_p,\\
&&
(\sigma_v)_p=x_v({\bu_0})\left(\frac{\partial}{\partial x}\right)_p+y_v({\bu_0})\left(\frac{\partial}{\partial y}\right)_p+t_v({\bu_0})\left(\frac{\partial}{\partial t}\right)_p
\end{eqnarray*}
and the normal $N_p$ of $\calS$ at $p$ is the exterior product $(\sigma_u)_p\wedge(\sigma_v)_p$:
$$
N_p=\left|\frac{\partial(y,t)}{\partial(u,v)}\right|_{\bu_0}\left(\frac{\partial}{\partial x}\right)_p+\left|\frac{\partial(t,x)}{\partial(u,v)}\right|_{\bu_0}\left(\frac{\partial}{\partial y}\right)_p+\left|\frac{\partial(x,y)}{\partial(u,v)}\right|_{\bu_0}\left(\frac{\partial}{\partial t}\right)_p.
$$
Here,
$$
\left|\frac{\partial(y,t)}{\partial(u,v)}\right|_{\bu_0}=y_u(\bu_0)t_v(\bu_0)-y_v(\bu_0)t_u(\bu_0)
$$
and similarly for the other determinants.
 The {\it horizontal normal} $N_p^h$ of $\calS$ at $p$ is defined locally by the relation
\begin{equation}\label{eq:hornor}
N_p^h=\left(\left|\frac{\partial(y,t)}{\partial(u,v)}\right|_{\bu_0}+2y(\bu_0)\left|\frac{\partial(x,y)}{\partial(u,v)}\right|_{\bu_0}\right)X_p+\left(\left|\frac{\partial(t,x)}{\partial(u,v)}\right|_{\bu_0}-2x(\bu_0)\left|\frac{\partial(x,y)}{\partial(u,v)}\right|_{\bu_0}\right)Y_p,
\end{equation}
and is an element of the horizontal space ${\rm H}_p(\h)$. 

When $N^h_p$ is not zero we may define the unit horizontal normal $\nu_p^h=N^h_p/\|N^h_p\|$ at $p$. One may show that away from points where the horizontal normal vanishes, a $\cC^1$ horizontal vector field $\nu_{\calS}^h$ of $\calS$ is defined by
$$
(\nu_{\calS}^h)_p=\nu_p^h.
$$
The set of points $p\in\calS$ such that $\|N^h_p\|=0$ is the {\it characteristic locus} $\fC(\calS)$ of $\calS$.

A natural 1-form $\omega_\calS$ is defined on $\calS$ via the inclusion map $\iota:\calS\hookrightarrow\h$; $\omega_\calS$ is just $\iota^*\omega$, where $\omega$ is the contact form of $\h$. The following hold:
\begin{enumerate}
\item[i)] If $\sigma:U\to\R^3$, $\sigma(u,v)=\left(x(u,v),y(u,v),t(u,v)\right)$ is a surface patch of $\calS$, then the following local formula holds:
\begin{equation}\label{eq:omegaS}
\omega_\calS=\sigma^*\omega=(t_u+2xy_u-2yx_u)du+(t_v+2xy_v-2yx_v)dv.
\end{equation}
\item[ii)] The characteristic locus $\fC(\calS)$ is exactly the set of points $p\in\calS$ such that $(\omega_\calS)_p=0$.
\item[iii)] The 1-form $\omega_\calS$ defines an integrable foliation of $\calS$ (with singularities at points of $\fC(\calS)$) by horizontal surface curves. These curves are integral curves of ${\mathbb J}\nu_\calS^h$. Here, $\mathbb{J}$ is the complex operator acting on the horizontal space of $\h$ by the relations $\mathbb{J}X=Y$ and $\mathbb{J}Y=-X$. 
\end{enumerate}

Let $\nu^h=\nu_1X+\nu_2Y$ be the unit horizontal normal vector field of $\calS$; then at a non characteristic point $p$ of $\calS$ the {\it horizontal mean curvature} $H^h(p)$ of $\calS$ at $p$ is just $H^h(p)=X_p\nu_1+Y_p\nu_2$. Suppose that $\sigma:U\to\R^3$, $\sigma(u,v)=(x(u,v),y(u,v),t(u,v))$, is a surface patch of $\calS$ such that $p=\sigma(\bu_0)$ for some $\bu_0=(u_0,v_0)\in U$. Then if
$$
\left|\frac{\partial(x,y)}{\partial(u,v)}\right|_{\bu_0}\neq 0,
$$ we have:
\begin{equation}
H^h(p)=\frac{\left|\frac{\partial(\nu_1,y)}{\partial(u,v)}\right|_{\bu_0}+\left|\frac{\partial(x,\nu_2)}{\partial(u,v)}\right|_{\bu_0}}{\left|\frac{\partial(x,y)}{\partial(u,v)}\right|_{\bu_0}}.
\end{equation}
It can be shown that if $\gamma$ is an integral curve of $\mathbb{J}\nu_\calS^h$ passing from $p$, then $H^h(p)$ is $\kappa_{\bf s}(p')$, the value of the signed curvature of the projection of $\gamma$ on the complex plane at the projection $p'$ of $p$. 

There is a notion of {\it horizontal area} (or perimeter) for $\calS$. For a surface patch $\sigma$ of $\calS$, $\sigma:U\to\R^3$, $\sigma=\sigma(u,v)$, the horizontal area $\calA^h(\sigma)$ of $\sigma$ is
$$
\calA^h(\sigma)=\iint_U\|N^h(u,v)\| dudv,
$$  
where $N^h(u,v)=N^h_{\sigma(u,v)}$. The measure $\|N^h(u,v)\| dudv$ is the local restriction of the 3-dimensional Carnot-Carath\'eodory measure to $\calS$.  

\section{Surfaces of Revolution in the Heisenberg Group}\label{sec:surfrev}
In this section we shall take a brief look in the horizontal geometry of $\cC^2$ surfaces of revolution when these are considered embedded in the Heisenberg group $\h$. In Section \ref{sec:surfrevg} we define the surfaces of revolution we are going to work with; we impose on these surfaces some plausible conditions so that their shape is more or less round. In Section \ref{sec:horgeomrev} we study the aspects of their horizontal geometry; finally, in Section \ref{sec:examples} we give three characteristic examples of surfaces of revolution.
\subsection{Surfaces of Revolution}\label{sec:surfrevg}
We recall the definition of a surface of revolution in the classical sense. Let
\begin{equation}\label{eq:profcurve}
\bp(s)=(f(s),0,g(s)),\quad s\in (a_\bp,b_\bp),
\end{equation}
be a regular, $\cC^2$ curve lying in the $xt$-plane. Regularity here means $\dot f^2(s)+\dot g^2(s)>0$ for $s\in(a_\bp,b_\bp)$. We shall also hereafter consider the following two assumptions for $\bp$:
\begin{equation}\label{A1}
f(s)>0\;\text{for}\; s\in(a_\bp,b_\bp)\;\text{and}\; \lim_{s\to a_\bp^-}f(s)=\lim_{s\to b_\bp^-}f(s)=0.
\end{equation}
That is, there is no $s_0\in(a_\bp,b_\bp)$ such that $\bp(s_0)$ lies on the vertical $t$-axis $\mathcal{V}$. Also,
\begin{equation}\label{A2}
\dot g(s)<0\;\text{for each}\; s\in(a_\bp,b_\bp)\;\text{and}\; \lim_{s\to a_\bp^-}g(s)>0,\;\; \lim_{s\to b_\bp^-}g(s)<0.
\end{equation}
This ensures us that $f$ and $g$ have continuous extensions to $[a_\bp,b_\bp]$ so that $\bp$ may be extended continuously to include the points
$$
N=(0,0,g(a_\bp)),\quad\text{and}\quad S=(0,0,g(b_\bp)),
$$
which we call the {\it poles} of $\bp$. Additionally, $\bp$ is heading continuously downwards from the positive part of the vertical axis $\mathcal{V}$ to the negative part of $\mathcal{V}$. 
\begin{defn}\label{defnsurfrev}
Let $\bp$ be a $\cC^2$ curve as in (\ref{eq:profcurve}). The {\it surface of revolution} $\calS$ with profile curve $\bp$  is the surface obtained by rotating $\bp$ around  the vertical axis $\mathcal{V}$.
\end{defn}
A surface of revolution $\calS$ is a $\cC^2$ surface embedded in $\h^{*}=\C_{*}\times\R$: Let $\sigma$ be the surface patch
\begin{equation}\label{eq:patchrev}
\sigma(s,\phi)=\left(f(s)\cos\phi,f(s)\sin\phi,g(s)\right),\quad (s,\phi)\in (a_\bp,b_\bp)\times (0,2\pi).
\end{equation} 
This covers the whole surface $\calS$ minus the meridian $\bp$. Together with the surface patch $\sigma'$ defined by the same formula as (\ref{eq:patchrev}) but with $(s,\phi)\in(a_\bp,b_\bp)\times (-\pi,\pi)$, we cover the whole surface. Working with the surface patch $\sigma$ as in (\ref{eq:patchrev}) we find that the normal to $\calS$ at $\sigma(s,\phi)$ is
\begin{equation*}
N(s,\phi)=f(s)\left(-\dot g(s)\cos\phi\partial_x-\dot g(s)\sin\phi\partial_y+\dot f(s)\partial_t\right).
\end{equation*}
Its Euclidean norm is $|N(s,\phi)|=f(s)|\dot\bp(s)|\neq 0$; hence we obtain the regularity of $\calS$.
\begin{rem}\label{rem:extrev1}
Let $N$ and $S$ be the poles of $\bp$. The set $\overline{\calS}=\calS\cup\{N,S\}$ is a compact subset of $\h$ which might or might not be a $\cC^2$ surface embedded in $\h$. We note that if $\bp$ is a part of a $\cC^2$ {\it closed} curve which is symmetric with respect to the axis $\mathcal{V}$, then it can be proved that $\overline{\calS}$ is actually a $\cC^2$ surface, which is called an {\it extended} surface of revolution (see Remark 4, p. 77, in \cite{DC}). All surfaces of revolution in Section \ref{sec:examples} are actually extended surfaces of revolution. However. the surfaces of revoloution we consider here are not required to have a $\cC^2$ extension. 
\end{rem}
For clarity, we close this section with the following standard result: The area  of the surface of revolution $\calS$ depends only on its profile curve $\bp$:
\begin{equation*}
\calA(\calS)=2\pi\int_{a_\bp}^{b_\bp} f(s)|\dot\bp(s)|ds,
\end{equation*}
compare to the formula for the horizontal are $\calA^h(\calS)$ in the next section.
\subsection{Horizontal geometry of surfaces of revolution}\label{sec:horgeomrev}
Recal from Section \ref{sec:prel}  the Kor\'anyi map $\alpha:\h^{*}\to \overline{\calL}$,  $\alpha(z,t)=-|z|^2+it$. Consider the $\alpha$-image $\bp^*$ of $\bp$, where $\bp$ is as in (\ref{eq:profcurve}):
\begin{equation*}
\bp^*(s)=-f^2(s)+ig(s),\quad s\in(a_\bp,b_\bp).
\end{equation*}
Note that due to our Conditions (\ref{A1}) and (\ref{A2}) for $\bp$, 
$
\lim_{s\to a_\bp^+}\bp^*(s)
$ is in the positive imaginary axis and 
$
\lim_{s\to b_\bp^-}\bp^*(s)
$ is in the negative imaginary axis of $\overline{\calL}$. Moreover,
$$
|\dot\bp^*(s)|^2=4f^2(s)\dot f^2(s)+\dot g^2(s)\neq 0,
$$
for every $s\in(a_\bp,b_\bp)$.
Using the formulae of Section \ref{sec:surf} we have the following:
\begin{enumerate}
 \item The horizontal normal to $\calS$ is given in the surface patch $\sigma$ by
\begin{equation}\label{eq:hornorsurf}
 N^h(s,\phi)=f(s)\left(-\Im(e^{i\phi}\dot\bp^{*}(s))X_{\sigma(s,\phi)}+\Re(e^{i\phi}\dot\bp^{*}(s))Y_{\sigma(s,\phi)}\right).
\end{equation}
Its horizontal norm is $\|N^h(s,\phi)\|=f(s)|\dot\bp^{*}(s)|$ and therefore the characterictic locus $\mathfrak{C}(\calS)$ of $\calS$ is empty. The unit horizontal normal field $\nu^h_\calS$ of $\calS$ is given in  $\sigma$ by
\begin{equation*}
\nu^h(s,\phi)=-\frac{\Im(e^{i\phi}\dot\bp^{*}(s))}{|\dot\bp^{*}(s)|}X_{\sigma(s,\phi)}+\frac{\Re(e^{i\phi}\dot\bp^{*}(s))}{|\dot\bp^{*}(s)|}Y_{\sigma(s,\phi)}.
\end{equation*}
\item The horizontal area $\calA^h(\calS)=\calA^h(\sigma)$ of $\calS$ depends only on $\bp^*$:
\begin{equation*}\label{eq:horarearev}
\calA^h(\calS)
=2\pi\int_{a_\bp}^{b_\bp}f(s)|\dot\bp^*(s)|ds=2\pi\int_{a_\bp}^{b_\bp}\Re^{1/2}(-\bp^*(s))|\dot\bp^*(s)|ds.
\end{equation*}
\item The induced 1-form of $\calS$ is given in $\sigma$ by
$$
\omega_\calS=\Im(\dot\bp^*(s))ds-2\Re(\bp^*(s))d\phi.
$$
Hence the integral curves of the horizontal flow of $\calS$ passing from $p_0=\sigma(s_0,\phi_0)$ are 
given parametrically by
$$
\phi(s)=\phi_0+\frac{1}{2}\int_{s_0}^s\frac{\Im(\dot\bp^*(u))}{\Re(\bp^*(u))}du.
$$
\item Finally, the horizontal mean curvature of $\calS$ is given by
\begin{equation*}
H^h(s,\phi)=H^h(s)=-\frac{1}{f(s)}\Im\left(\frac{\dot\bp^{*}(s)}{|\dot\bp^{*}(s)|}\right)-\frac{1}{\dot f(s)}\Im\left(\frac{d}{ds}\left(\frac{\dot\bp^{*}(s)}{|\dot\bp^{*}(s)|}\right)\right).
\end{equation*}
\end{enumerate}
\begin{rem}\label{rem:extrev2}
If $\overline{\calS}$ is an extended surface of revolution, see Remark \ref{rem:extrev1}, then the characteristic locus $\mathfrak{C}(\overline{\calS})$ comprises of the poles $N$ and $S$ of $\bp$.
\end{rem}

\subsubsection{Examples}\label{sec:examples} We give three examples of surfaces of revolution which satisfy (\ref{A1}) and (\ref{A2}). These surfaces also share an additional property which we shall use in the next section. 

\medskip

\noindent 1. For $R>0$, the {\it Kor\'anyi sphere} $\calS_\h(R)=\{(z,t)\in\h\;|\;|z|^4+t^2=R^4\}$ minus its two poles $(0,0,\pm R^2)$, is the $cC^2$ surface of revolution with 
profile curve
$$
\bp(\beta)=(R(-\cos\beta)^{1/2},0,R^2\sin\beta),\quad \beta\in(\pi/2,3\pi/2).
$$
Consider the surface patch
$$
\sigma(\beta,\phi)=\left(R(-\cos\beta)^{1/2}e^{i\phi},R^2\sin\beta\right),\quad(\beta,\phi)\in (\pi/2,3\pi/2)\times (0,2\pi)
$$
%One shows immediately that the Cartesian equation for the Kor\'anyi sphere $\calS_R^\h$ of radius $R$ is $|z|^4+t^2=R^4$.
as well as
$$
\bp^{*}(\beta)=R^2e^{i\beta},\quad \beta\in(\pi/2,3\pi/2).
$$
This is a half circle lying on $\calL$, centred at the origin and with boundary points $\pm iR^2$.
Moreover,
$$
\dot\bp^{*}(\beta)=iR^2e^{i\beta},\quad \beta\in(\pi/2,3\pi/2).
$$
Observe that the curve $\bp^{*}$ is parametrised by its argument.

\medskip

\noindent 2. Our second example is the $\cC^2$ surface of revolution with profile curve
$$
\bp(s)=2R\left(\sin(s/2R),0,R\sin(s/R)-s+\pi R\right),\quad s\in (0,2\pi R).
$$
This is parametrised by the patch
$$
\sigma(s,\phi)=2R\left(\sin(s/2R)e^{i\phi},R\sin(s/R)-s+\pi R\right),\quad(s,\phi)\in (0,2\pi R)\times (0,2\pi).
$$
We have
$$
\bp^{*}(s)=2R^2\left(\cos(s/R)-1+i(\sin(s/R)-s/R+\pi)\right),\quad s\in (0,2\pi R),
$$
and
$$
\dot\bp^{*}(s)=2R\left(-2\sin(s/R)+i(\cos(s/R)-1)\right),\quad s\in (0,2\pi R).
$$
Let $\beta(s)=\arg(\bp^{*}(s))$. Then one shows that
$$
\tan(\beta(s))=\frac{\sin(s/R)-s/R+\pi}{\cos(s/R)-1},
$$
is strictly increasing in $(0,2\pi R)$. Therefore $\bp^{*}$ can be parametrised by its argument. Adding the points $(0,0,\pm\pi R^2)$ to this surface, we obtain the {\it bubble set} $\calB(R)$, see p. 23 in \cite{CDPT}.

\medskip

\noindent 3. For $R>0$, the {\it Carnot-Carath\'eodory sphere} $\calS_{cc}(R)=\{p\in\h\;|\; d_{cc}(0,p)=R\}$ minus its two poles $(0,0,\pm R^2/\pi)$, is a $\cC^2$ surface of revolution with profile curve
\begin{eqnarray*}
 \bp(k)&=&\left(|\bc_k(R)|,0,t_k(R)\right)\\
 &=&\left(\frac{1}{|k|}|1-e^{ikR}|,0,\frac{2}{k}\left(\frac{1}{k}\sin(kR)-R
\right)\right),\quad k\in(-2\pi/R,2\pi/R).
\end{eqnarray*}
It follows that $\sigma:(-2\pi/R,2\pi/R)\times(0,2\pi)\to\R^3$ with
$$
\sigma(k,\phi)=\left(\frac{1}{|k|}|1-e^{ikR}|e^{i\phi},\frac{2}{k}\left(\frac{1}{k}\sin(kR)-R
\right)\right),
$$
is a surface patch for $\calS_{cc}(R)$. Moreover,
$$
\bp^{*}(k)=\frac{2}{k^2}\left(\cos(kR)-1+i\left(\sin(kR)-kR
\right)\right),\quad k\in (-2\pi/R,2\pi/R),
$$
and by a similar argument to that used in the second example one finds that $\bp^{*}$ can also be parametrised by its argument.

\section{Revolution Ring and Revolution Coordinates}\label{sec:revring}
In this section we shall define coordinates for the Heisenberg group which generalise the logarithmic coordinates defined in \cite{P1}, see also \cite{BFP}. With the aid of these coordinates we are going to prove Theorem \ref{thm:main1} in the nest section. 
We first fix our setup: 
\begin{enumerate}
 \item [{(i)}] Throughout this section $\calS$ will be an arbitrary but fixed surface of revolution with profile curve $\bp$ as in (\ref{eq:profcurve}), satisfying (\ref{A1}) and (\ref{A2}). By $\overline{\calS}$ we shall denote the set $\calS\cup\{N,S\}$, where $N,S$ are the poles of $\bp^*$, see Remark \ref{rem:extrev1}. 
 \item [{(ii)}] We shall also impose the following condition: 
If $\beta(s)=\arg(\bp^{*}(s))$ then
 \begin{equation}\label{eq:betacond}
  \dot\beta(s)=\frac{\Im\left({\overline \bp^{*}}(s)\cdot\dot\bp^{*}(s)\right)}{|\bp^{*}(s)|^2}>0,\quad s\in(a_\bp,b_\bp).
 \end{equation}
\end{enumerate}
There is a geometric interpretation of (\ref{eq:betacond}); it simply means that $\beta(s)=\arg(\bp^{*}(s))$ is a strictly increasing function of $s$ and consequently $\bp^{*}$ can be parametrised by $\beta\in(\pi/2,3\pi/2)$. Recall that this is the case for all surfaces in Section \ref{sec:examples}. %Let $\psi=\pi-\beta$ $\in[-\pi/2.\pi/2]$. 
Then
$$
\bp^*=\bp^*(\beta)=-f^2(\beta)+ig(\beta)=|\bp^*(\beta)|e^{i\beta}, \quad \beta\in(\pi/2,3\pi/2),
$$
hence (\ref{eq:betacond}) may be written as
\begin{equation}\label{eq:betacond2}
\Im\left({\overline \bp^{*}}(\beta)\cdot\dot\bp^{*}(\beta)\right)=|\bp^{*}(\beta)|^2>0,\quad \beta\in(\pi/2,3\pi/2).
\end{equation}
%where from now on, dot will denote differentiation with respect to $\beta$ if not specifically stated otherwise. 
It follows that our  $\calS$ % is such that Conditions (\ref{A1}), (\ref{A2}) and (\ref{eq:betacond}) are satisfied, 
may be parametrised by the surface patch $\sigma:(\pi/2,3\pi/2)\times(0,2\pi)\to\R^3$, with
$$
\sigma(\beta,\phi)=\left(e^{i\phi}\Re^{1/2}(-\bp^*(\beta)),\;\Im(\bp^*(\beta))\right),\quad (\beta, \phi)\in (\pi/2,3\pi/2)\times(0,2\pi).
$$
%$$
%\sigma(\beta,\phi)=\left(e^{i\phi}|\bp^{*}(\beta)|^{1/2}(-\cos\beta)^{1/2},\;|\bp^{*}(\beta)|\sin\beta\right),\quad (\beta, \phi)\in (\pi/2,3\pi/2)\times(0,2\pi).
%$$
We may now represent the set $\overline{\calS}$ as a hypersurface: Recall from Section \ref{sec:heis} the Kor\'anyi gauge $|\cdot|_\h$: $|(z,t)|_\h=|\alpha(z,t)|^{1/2}$ for each $(z,t)\in\h$. Then $\overline{\calS}$ comprises of the points $(z,t)$ in $\h$ such that 
\begin{equation}\label{eq:hypersurface}
\frac{|(z,t)|_\h}{|\bp^*(\arg(\alpha(z,t)))|^{1/2}}=1.
\end{equation}
Obviously, $\calS$ comprises of points  $(z,t)\in\h^*=\C_*\times\R$ satisfying (\ref{eq:hypersurface}) and hence $\calS$ is a $\cC^2$ hypersurface. As for $\overline{S}$, it may or may not be $\cC^2$; both cases do not affect our subsequent discussion.
\begin{defn}\label{defn:rev-ring}
Let $0<a<b$ and let $\calS$ be a surface of revolution such that Conditions (\ref{A1}), (\ref{A2}) and (\ref{eq:betacond}) are satisfied. The {\it revolution ring} $R_{a,b}(\calS)$ is the set 
\begin{equation}\label{eq:ring}
R_{a,b}(\calS)=\left\{(z,t)\in\h \;|\;a<\frac{|(z,t)|_\h}{|\bp^*(\arg(\alpha(z,t)))|^{1/2}}<b\right\}.
\end{equation}
That is,  $R_{a,b}(\calS)$ is the subset of $\h$ which is bounded between the dilated images $D_\alpha(\overline{\calS})$ and $D_b(\overline{\calS})$ of $\overline{\calS}=\calS\cup\{N,S\}$, where $N,S$ are the poles of the profile curve of $\calS$.
\end{defn}

In the remaining of this section we will define revolution coordinates for the set $\h_*=\h\setminus\{(0,0)\}$ and study their properties. Our treatment is similar to that of logarithmic coordinates in \cite{BFP}. Revolution coordinates are defined by the following proposition:
\begin{prop}\label{prop:revcoords}
Let $\calS$ be our fixed surface of revolution as abobe and let $A=\R\times[\pi/2,3\pi/2]$ $\times[0,2\pi]$ with coordinates $(\xi,\beta,\phi)$. We define a map
$\Phi:A\to\h_*$ by 
\begin{equation*}
\Phi(\xi,\beta,\phi)=\left(e^{\xi+i\phi}\Re^{1/2}(-\bp^*(\beta)),\; e^{2\xi}\Im(\bp^{*}(\beta))\right),\quad (\xi,\beta, \phi)\in A.
%%\Phi(\xi,\beta,\phi)=\left(e^{\xi+i\phi}|\bp^{*}(\beta)|^{1/2}(-\cos\beta)^{1/2},\;e^{2\xi}|\bp^{*}(\beta)|\sin\beta\right),\quad (\xi,\beta, \phi)\in A.
\end{equation*}
Then $\Phi$ is invertible with inverse $\Phi^{-1}:\h_*\to A$ given by
$$
\Phi^{-1}(z,t)=\left(\frac{1}{2}\log\left(\frac{|\alpha(z,t)|}{|{\bf P}(z,t)|}\right),\;\arg(\alpha(z,t)),\;\arg z\right),\quad(z,t)\in\h_*.
$$
Here, $\alpha(z,t)=-|z|^2+it$ and $
{\bf P}(z,t)=\bp^{*}(\arg(\alpha(z,t))).
$
\end{prop}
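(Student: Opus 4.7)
The plan is to verify directly that the stated formula for $\Phi^{-1}$ is a two-sided inverse of $\Phi$, the only substantive input being condition (\ref{eq:betacond}), which guarantees that $\bp^*$ is a bijection from $(\pi/2,3\pi/2)$ onto its image parametrised by its argument. This turns the verification into algebraic manipulation of the Kor\'anyi map $\alpha(z,t)=-|z|^2+it$ and the polar decomposition $\bp^*(\beta)=|\bp^*(\beta)|e^{i\beta}$.

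I would start by computing $\alpha\circ\Phi$. If $(z,t)=\Phi(\xi,\beta,\phi)$, then $|z|^2=e^{2\xi}\Re(-\bp^*(\beta))$ and $t=e^{2\xi}\Im(\bp^*(\beta))$, so
\[
\alpha(z,t) \;=\; -|z|^2 + it \;=\; e^{2\xi}\bigl(\Re(\bp^*(\beta))+i\Im(\bp^*(\beta))\bigr) \;=\; e^{2\xi}\bp^*(\beta).
\]
Since $e^{2\xi}>0$, this identity immediately gives $\arg(\alpha(z,t))=\arg(\bp^*(\beta))=\beta$ and $|\alpha(z,t)|/|\bp^*(\beta)|=e^{2\xi}$. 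Hence the first component of $\Phi^{-1}(z,t)$ is $\tfrac12\log(e^{2\xi})=\xi$, and the third component is $\arg z =\phi$, using that $\Re^{1/2}(-\bp^*(\beta))=f(\beta)>0$ on the open interval. This establishes $\Phi^{-1}\circ\Phi=\mathrm{id}$.

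For the reverse composition, given $(z,t)\in\h_*$, set $\beta=\arg(\alpha(z,t))$; since $\Re(\alpha(z,t))=-|z|^2\le 0$ this lies in $[\pi/2,3\pi/2]$. By (\ref{eq:betacond}), $\bp^*(\beta)$ is well defined and shares its argument with $\alpha(z,t)$, whence
\[
\alpha(z,t) \;=\; \frac{|\alpha(z,t)|}{|\bp^*(\beta)|}\,\bp^*(\beta) \;=\; e^{2\xi}\bp^*(\beta),\qquad \xi=\tfrac12\log\bigl(|\alpha(z,t)|/|{\bf P}(z,t)|\bigr).
\]
Taking real and imaginary parts gives $|z|^2=e^{2\xi}f^2(\beta)$ and $t=e^{2\xi}\Im(\bp^*(\beta))$; combining the former with $\phi=\arg z$ yields $z=e^{\xi+i\phi}\Re^{1/2}(-\bp^*(\beta))$. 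This is precisely $\Phi(\Phi^{-1}(z,t))$.

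The principal obstacle is the dependence of the whole construction on condition (\ref{eq:betacond}): without the monotonicity of $\beta(s)$ the assignment $\arg(\alpha(z,t))\mapsto\bp^*(\arg(\alpha(z,t)))$ would not be single-valued. A secondary nuisance is the degeneracy on the vertical axis and meridian, where $\arg z$ is ambiguous and $\beta$ lies on $\{\pi/2,3\pi/2\}$; there $f(\beta)=0$ kills the dependence of $\Phi$ on $\phi$, so the standard identifications on the boundary of $A$ make the statement hold on the entire $\h_*$.
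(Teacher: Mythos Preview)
Your proof is correct and follows essentially the same route as the paper: both hinge on the identity $\alpha(\Phi(\xi,\beta,\phi))=e^{2\xi}\bp^*(\beta)$, from which $\beta$, $\xi$, and $\phi$ are read off via argument, modulus, and $\arg z$. You are in fact slightly more thorough than the paper, which only checks that the formula for $\Phi^{-1}$ is forced, whereas you also verify the reverse composition $\Phi\circ\Phi^{-1}=\mathrm{id}$ and discuss the boundary degeneracies on the vertical axis.
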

\begin{proof}
By our assumptions for the boundary points of $\bp^{*}$, the map $\Phi$ is well defined everywhere on $A$. Moreover, $\Phi$ is invertible. To see this, we set $\Phi(\xi,\beta,\phi)=(z,t)$. Then
$
\arg(\alpha(z,t))=\arg(e^{i\beta})
$
and therefore
$
\beta=\arg(a(z,t)).
$
%where $a:\h\to\overline{\calL}$ is the Kor\'anyi map.
Also,
$
|a(z,t)|=\left|-|z|^2+it\right|=e^{2\xi}|\bp^{*}(\beta)|=e^{2\xi}|{\bf P}(z,t)|
$. %, with 
%$
%{\bf P}(z,t)=\left|\bp^{*}(\arg(a(z,t)))\right|.
%$
Hence
$
\xi=\frac{1}{2}\log\left(|a(z,t)|/|{\bf P}(z,t)|\right).
$
Finally, by definition we have
$
\phi=\arg z
$
and we conclude that $\Phi$ is invertible. 
\end{proof}
The map $\Phi$ is smooth and locally injective on the domain $\tilde\h^*=\R\times(\pi/2,3\pi/2)\times\R$ and $\Phi(\tilde\h^*)=\h^*=\C_*\times\R$. The Jacobian $J_\Phi$ is
$$
J_\Phi(\xi,\beta,\phi)=e^{4\xi}|\bp^{*}(\beta)|^2,\quad (\xi,\beta,\phi)\in\tilde\h^*,
$$
which is strictly positive. Note that $\Phi:\tilde\h^*\to\h^*$ is a smooth covering map. Therefore for each curve $\gamma:[a,b]\to\h^*$ and for each point $(\xi,\beta,\phi)\in\Phi^{-1}(\{\gamma(a)\})$, there exists a unique lifted curve $\tilde\gamma:[a,b]\to\tilde\h^*$ such that $\gamma=\Phi\circ\tilde\gamma$ and $\tilde\gamma(a)=(\xi,\beta,\phi)$. If $\gamma$ is absolutely continuous in the Euclidean sense or $\cC^k$, $k=0,1,\dots$, then the same hold for $\tilde\gamma$ as well. Let $\tilde U\subseteq\tilde\h^*$ be an open set such that $(\tilde U, (\Phi_{\tilde U})^{-1})$ is a local chart for $\h^*$. Straightforward calculations lead to the following local expression for the contact form $\omega$ of $\h$ in revolution coordinates:
\begin{equation*}%\label{eq:revcont}
\omega=2e^{2\xi}\left(\Im(\bp^*(\beta)) d\xi+\Re(\bp^*(\beta)) d\phi\right)
%\omega=2e^{2\xi}|\bp^{*}(\beta)|\left(\sin\beta d\xi-\cos\beta d\phi\right)
+e^{2\xi}\Im(\dot\bp^{*}(\beta))d\beta.
\end{equation*}
The next proposition is important for our subsequent discussion; it is analogous to Proposition 10 in \cite{BFP}:
\begin{prop}
A curve $\gamma:[a,b]\to\h^*$ is horizontal if and only if there exists an absolutely continuous curve 
$$
\tilde\gamma:[a,b]\to\tilde\h^*,\quad \tilde\gamma(\tau)=(\xi(\tau),\beta(\tau),\phi(\tau)),
$$
with $\gamma=\Phi\circ\tilde\gamma$ and
\begin{equation}\label{eq:horcurveg}
\dot\phi(\tau)=\tan\left(\beta(\tau)\right)\dot\xi(\tau)+\frac{\Im\left(\dot\bp^{*}(\beta(\tau))\right)}{2\Re\left(\bp^{*}(\beta(\tau))\right)}\dot \beta(\tau).
\end{equation}
If a horizontal curve $\gamma;[a,b]\to\h_*$ satisfies $\gamma(\tau)\in\h^*$ for almost every $\tau\in[a,b]$, then there exists
$$
\tilde\gamma:[a,b]\to\R\times[\pi/2,3\pi/2]\times\R,\quad \gamma(\tau)=(\xi(\tau),\beta(\tau),\phi(\tau)),
$$
with $\tau\mapsto\xi(\tau)$ absolutely continuous, such that for all $\tau\in[a,b]\cap\gamma^{-1}(\h^*)$ we have $\Phi(\tilde\gamma(\tau))=\gamma(\tau)$ and (\ref{eq:horcurveg}) holds almost everywhere. Moreover, if $\rho:\h\to[0,+\infty)$ is any Borel function, then
$$
\int_\gamma\rho ds^h=\int_a^b\rho\left(\Phi(\tilde\gamma(\tau))\right)\frac{e^{\xi(\tau)}}{\Re^{1/2}\left(-\bp^{*}(\beta(\tau))\right)}\left|\bp^{*}(\beta(\tau))\dot\xi(\tau)+\frac{1}{2}\dot\bp^{*}(\beta(\tau))\dot \beta(\tau)\right|d\tau.
$$
\end{prop}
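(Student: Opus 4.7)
The plan is to reduce the proposition to the explicit local expression for $\Phi^*\omega$ computed just above the statement. Using $\bp^*(\beta)=|\bp^*(\beta)|e^{i\beta}$ one has $\Im(\bp^*(\beta))/\Re(\bp^*(\beta))=\tan\beta$, and on $\tilde\h^*$ the coefficient of $d\phi$ in $\Phi^*\omega$ is a nonvanishing scalar multiple of $\Re(\bp^*(\beta))$ (indeed $\cos\beta\neq 0$ on $(\pi/2,3\pi/2)$ and $|\bp^*|>0$). A curve $\gamma=\Phi\circ\tilde\gamma$ with $\tilde\gamma$ valued in $\tilde\h^*$ is horizontal precisely when $(\Phi^*\omega)(\dot{\tilde\gamma})=0$; solving this linear equation for $\dot\phi$ yields exactly (\ref{eq:horcurveg}).

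For the \emph{if and only if} direction, I would invoke the covering map property already noted by the author, namely that $\Phi:\tilde\h^*\to\h^*$ is a smooth covering. Given an absolutely continuous horizontal $\gamma:[a,b]\to\h^*$ and any preimage $\tilde\gamma(a)\in\Phi^{-1}(\{\gamma(a)\})$, unique path-lifting produces an absolutely continuous lift $\tilde\gamma=(\xi,\beta,\phi)$, and the preceding paragraph gives (\ref{eq:horcurveg}). Conversely, any absolutely continuous $\tilde\gamma$ solving (\ref{eq:horcurveg}) in $\tilde\h^*$ yields a horizontal $\gamma=\Phi\circ\tilde\gamma$ by the same computation.

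For the second statement, I would first lift $\gamma$ on the open set $\gamma^{-1}(\h^*)\subseteq[a,b]$ as above, and then extend $\xi$ to all of $[a,b]$ via the relation $e^{2\xi}=|\alpha(\gamma)|/|\bp^*(\arg\alpha(\gamma))|$. Condition (\ref{A2}) provides strictly positive limits $g(a_\bp)$ and $-g(b_\bp)$ for $|\bp^*(\beta)|$ at $\beta=\pi/2,3\pi/2$, so $|\bp^*|$ extends continuously and positively to the closed interval; since $\arg\alpha$ is itself continuous on all of $\h_*$ (because $\alpha=-|z|^2+it$ stays in the closed left half-plane with $|\alpha|>0$ off the origin), the right-hand side extends to a continuous positive function on $\h_*$. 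Absolute continuity of $\tau\mapsto\xi(\tau)$ then follows from absolute continuity of $\gamma$ together with local Lipschitz-type control on this extension. This is the main technical obstacle, handled in the spirit of the analogous step for logarithmic coordinates in Proposition~10 of \cite{BFP}. Once $\xi$ is known to be absolutely continuous on $[a,b]$, equation (\ref{eq:horcurveg}) holds at almost every point of the full-measure subset $\gamma^{-1}(\h^*)$, hence almost everywhere.

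For the integral formula, the key identity is
\[
\alpha(\gamma(\tau))=-|z(\tau)|^2+it(\tau)=e^{2\xi(\tau)}\bp^*(\beta(\tau)),
\]
read off directly from the definition of $\Phi$. Differentiating along $\tilde\gamma$ yields $\dot\alpha=2e^{2\xi}\bigl(\bp^*(\beta)\dot\xi+\tfrac{1}{2}\dot\bp^*(\beta)\dot\beta\bigr)$. On the other hand, horizontality $\dot t=-2\Im(\bar z\dot z)=i(\bar z\dot z-z\dot{\bar z})$ substituted into $\dot\alpha=-\dot z\bar z-z\dot{\bar z}+i\dot t$ collapses to the clean identity $\dot\alpha=-2\bar z\dot z$, whence $|\dot\alpha|=2|z||\dot z|$. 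Combining with $|z|=e^{\xi}\Re^{1/2}(-\bp^*(\beta))$ gives
\[
\|\dot\gamma_h(\tau)\|=|\dot z(\tau)|=\frac{e^{\xi(\tau)}}{\Re^{1/2}(-\bp^*(\beta(\tau)))}\left|\bp^*(\beta(\tau))\dot\xi(\tau)+\tfrac{1}{2}\dot\bp^*(\beta(\tau))\dot\beta(\tau)\right|,
\]
and substituting into $\int_\gamma\rho\,ds^h=\int_a^b\rho(\gamma(\tau))\|\dot\gamma_h(\tau)\|\,d\tau$ produces the claimed formula. Everything but the absolute continuity argument in the third paragraph is an essentially algebraic consequence of the explicit form of $\Phi$ and of the contact structure on $\h$.
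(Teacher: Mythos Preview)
Your argument is correct and structurally mirrors the paper: both invoke the covering property of $\Phi$ for the lift, both read horizontality off from $\Phi^*\omega$, and both defer the technical issue of extending $\tilde\gamma$ across the vertical axis to Proposition~10 of \cite{BFP}.

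The one substantive difference is your derivation of the horizontal speed formula (\ref{eq:horspeedg}). The paper differentiates $\gamma_h(\tau)=e^{\xi+i\phi}\Re^{1/2}(-\bp^*(\beta))$ directly, takes absolute values, and then substitutes (\ref{eq:horcurveg}) to eliminate $\dot\phi$; this works but involves some algebra with $1+i\tan\beta=e^{i\beta}/\cos\beta$ and the $\beta$-derivative of $\Re^{1/2}(-\bp^*)$. Your route via the Kor\'anyi map is cleaner: the identity $\dot\alpha=-2\bar z\dot z$ along horizontal curves, combined with $\alpha\circ\gamma=e^{2\xi}\bp^*(\beta)$, bypasses $\phi$ entirely and lands on the formula without ever invoking (\ref{eq:horcurveg}). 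This is a genuine simplification, and it makes transparent why the speed depends only on $(\xi,\beta)$ and their derivatives. The paper's approach, by contrast, keeps everything in the real coordinates of $\gamma_h$ and shows explicitly where the horizontality constraint enters.
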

\begin{proof}
If $\tilde\gamma:[a,b]\to\tilde\h^*$ is absolutely continuous and satisfies (\ref{eq:horcurveg}), then we define $\gamma$ to be the absolutely continuous curve $\Phi\circ\tilde\gamma$. Conversely, if $\gamma:[a,b]\to\h^{*}$ is horizontal, we consider the lift $\tilde\gamma$ of $\gamma$ with respect to the covering map $\Phi$. Now suppose that for an almost everywhere differentiable curve $\gamma:[a,b]\to\h_*$ we are given a $\tilde\gamma:[a,b]\to\R\times[\pi/2,3\pi/2]\times\R$ such that $\Phi\circ\tilde\gamma=\gamma$ for all $\tau\in[a,b]\cap\gamma^{-1}(\h^*)$. If $\tau$ is such a point and it is a point of differentiability, then it has a neighbourhood at which we also have $\Phi\circ\tilde\gamma=\gamma$. Writing $\gamma=(\gamma_h,\gamma_3)$ we have
$$
\gamma_h(\tau)=e^{\xi(\tau)+i\phi(\tau)}\Re^{1/2}\left(-\bp^{*}(\beta(\tau))\right).
$$
By differentiating with respect to $\tau$, 
taking absolute values in both sides and using (\ref{eq:horcurveg}) we obtain
\begin{equation}\label{eq:horspeedg}
\left\|\dot\gamma_h(\tau)\right\|= e^{\xi(\tau)}\Re^{-1/2}(-\bp^{*}(\beta(\tau)))\cdot\left|\bp^{*}(\beta(\tau))\dot\xi(\tau)+\frac{1}{2}\dot\bp^{*}(\beta(\tau))\dot \beta(\tau)\right|.
\end{equation}
Thus for such a horizontal curve the formula for the curve integral follows immediately. It only remains to prove the existence of $\gamma$ in the case where $\gamma$ meets the vertical axis $\mathcal{V}$. This is done by arguing exactly as in the last part of the proof of Proposition 10 in \cite{BFP}; we refer the reader there for further details.
\end{proof}
Let $\Omega\subseteq\h$ be a measurable set and let $\tilde\Omega\subseteq\tilde\h^*$ be an open set such that $\Phi(\tilde\Omega)$ coincides with $\Omega$ up to a set of measure zero and such that the restriction of  $\Phi$ to ${\tilde\Omega}$ is invertible. Then a function $\sigma:\Omega\to\R$ is integrable if and only if $(\sigma\circ\Phi)\cdot J_\Phi$ is integrable in $\tilde\Omega$ and in this case:
\begin{equation*}
\iiint_\Omega\sigma(z,t)d\cm^3(z,t)=\iiint_{\tilde\Omega}
\sigma(\Phi(\xi,\beta,\phi))e^{4\xi}|\bp^{*}(\beta)|^2d\cm^3(\xi,\beta,\phi).
\end{equation*}
In particular, for $\Omega=R_{a,b}(\calS)$ as in (\ref{defn:rev-ring}) we have that for every integrable function $\sigma:R_{a,b}(\calS)\to\R$ we have
$$
\iiint_{R_{a,b}(\calS)}\sigma(z,t)d\cm^3(z,t)=\iiint_B\sigma(\Phi(\xi,\beta,\phi)e^{4\xi}|\bp^{*}(\beta)|^2d\cm^3(\xi,\beta,\phi),
$$
where
$$
B=(\log a,\log b)\times(\pi/2,3\pi/2)\times(0,2\pi).
$$

\section{Modulus of Boundary Connecting Curves}\label{sec:mod1}
This section is mainly devoted to the proof of Theorem \ref{thm:main1} below. A final note on the geometric behaviour of quasiradials is in Section \ref{sec:note}. 
\begin{thm}\label{thm:main1}
Let $\calS$ be a surface of revolution with profile curve $\bp$ satisfying Conditions (\ref{A1}), (\ref{A2}). Let $\alpha:\h\to\overline{\calL}$ be the Kor\'anyi map and let $\bp^*=\alpha\circ\bp$ satisfying Condition (\ref{eq:betacond}). Let also  $R_{a,b}(\calS)$, $0<a<b$, be the revolution ring as in (\ref{eq:ring}) and let $\Gamma$ be the family of horizontal curves which lie inside  $R_{a,b}(\calS)$ and connect its two boundary components. Then the following hold for the modulus ${\rm Mod}(\Gamma)$ of $\Gamma$:
\begin{enumerate}
 \item $${\rm Mod}(\Gamma)\ge\pi^2\left(\log(b/a)\right)^{-3}=\iiint_\h\rho_0^4(z,t)d\cm^3(z,t),$$
with $\rho_0$ given by
\begin{equation}\label{eq:rho-0}
\rho_0(z,t)=
\frac{1}{\log(b/a)}\frac{|z|}{(|z|^4+t^2)^{1/2}}\mathcal{X}(z,t),\quad (z,t)\in\h.
\end{equation}
Here, $\mathcal{X}$ is the characteristic function of $R_{a,b}(\calS)$.
 \item  Suppose that $\bp^*$ is parametrised in $(\pi/2,3\pi/2)$ and  denote again by $\bp^*$ the continuous extension of $\bp^*$ to $[\pi/2,3\pi/2]$. 
Then  we have $${\rm Mod}(\Gamma)=\pi^2\left(\log(b/a)\right)^{-3}$$ and $\rho_0$ is an extremal density for $\Gamma$. 
\end{enumerate}
\end{thm}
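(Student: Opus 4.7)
The plan is to establish (1) by a length--area argument and then prove (2) by directly showing $\rho_{0}$ is admissible. Both parts work throughout in the revolution coordinates of Section~\ref{sec:revring}.

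For (1), I use a family of ``quasiradial'' test curves: for each $(\beta_{0},\phi_{0})\in(\pi/2,3\pi/2)\times(0,2\pi)$, define $\tilde\gamma_{\beta_{0},\phi_{0}}:[\log a,\log b]\to\tilde\h^{*}$ by $\xi\mapsto(\xi,\beta_{0},\phi_{0}+\tan(\beta_{0})(\xi-\log a))$. Since $\dot\beta=0$, this satisfies (\ref{eq:horcurveg}) and $\gamma_{\beta_{0},\phi_{0}}=\Phi\circ\tilde\gamma_{\beta_{0},\phi_{0}}$ is a horizontal curve in $\Gamma$. By (\ref{eq:horspeedg}) its horizontal speed is $e^{\xi}|\bp^{*}(\beta_{0})|\Re^{-1/2}(-\bp^{*}(\beta_{0}))$, so admissibility of $\rho$ gives
\[
1\le\int_{\log a}^{\log b}\rho(\Phi(\xi,\beta_{0},\phi(\xi)))\,\frac{e^{\xi}|\bp^{*}(\beta_{0})|}{\Re^{1/2}(-\bp^{*}(\beta_{0}))}\,d\xi.
\]
Apply H\"older with exponents $4$ and $4/3$ against the Jacobian weight $J_{\Phi}=e^{4\xi}|\bp^{*}|^{2}$. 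A short computation shows the dual factor $J_{\Phi}^{-1/3}(e^{\xi}|\bp^{*}|/\Re^{1/2}(-\bp^{*}))^{4/3}$ simplifies to $(-\cos\beta_{0})^{-2/3}$, which is $\xi$-independent. Rearranging yields $\int\rho^{4}J_{\Phi}\,d\xi\ge\cos^{2}\beta_{0}/(\log(b/a))^{3}$; integrating against $d\beta_{0}\,d\phi_{0}$, using Fubini, the change-of-variables formula for $\Phi$, and $\int_{\pi/2}^{3\pi/2}\cos^{2}\beta\,d\beta=\pi/2$, produces $\iiint_{R_{a,b}(\calS)}\rho^{4}\,d\cm^{3}\ge\pi^{2}/(\log(b/a))^{3}$. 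Taking the infimum over admissible $\rho$ is the lower bound. The identity $\iiint\rho_{0}^{4}\,d\cm^{3}=\pi^{2}/(\log(b/a))^{3}$ follows from the parallel direct computation $\rho_{0}^{4}J_{\Phi}=\cos^{2}\beta/(\log(b/a))^{4}$ in revolution coordinates.

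For (2), I prove $\rho_{0}$ is admissible, which combined with (1) forces the equality. Differentiating $\alpha=-|z|^{2}+it$ along a horizontal curve $\gamma=(\gamma_{h},\gamma_{3})$ and using $\dot\gamma_{3}=-2\Im(\overline{\gamma_{h}}\dot\gamma_{h})$ gives the identity $d\alpha(\gamma)/d\tau=-2\overline{\gamma_{h}}\dot\gamma_{h}$, hence $|d\alpha(\gamma)/d\tau|=2|\gamma_{h}|\,\|\dot\gamma_{h}\|$. Substituting $\rho_{0}=|\gamma_{h}|/(\log(b/a)\cdot|\alpha(\gamma)|)$ into the horizontal line integral produces
\[
\int_{\gamma}\rho_{0}\,ds^{h}=\frac{1}{2\log(b/a)}\int_{a}^{b}\left|\frac{d}{d\tau}\log\alpha(\gamma(\tau))\right|d\tau,
\]
which is $(2\log(b/a))^{-1}$ times the Euclidean length in $\C$ of the image curve $\log\alpha\circ\gamma$; the continuous branch $2\xi+\log|\bp^{*}(\beta)|+i\beta$ is well defined because Condition (\ref{eq:betacond}) confines $\alpha(\gamma)$ to the left half-plane. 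Admissibility of $\rho_{0}$ thus reduces to the geometric statement that this length is at least $2\log(b/a)$ for every horizontal boundary-connecting curve.

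The main obstacle is precisely this length bound. Under $\log\alpha$, the inner and outer boundary surfaces become the graphs $u=2\log a+\log|\bp^{*}(v)|$ and $u=2\log b+\log|\bp^{*}(v)|$ in the $(u,v)$-plane; these are horizontal translates of one another by $2\log(b/a)$ but in general not straight vertical lines (unless $|\bp^{*}|$ is constant, as in the Kor\'anyi case). The proof must therefore exploit the forced $\log(b/a)$-variation of $\xi$ between its boundary values together with the decomposition $\Re\log\alpha(\gamma(\tau))=2\xi(\tau)+\log|\bp^{*}(\beta(\tau))|$ in order to beat the naive strip-width estimate. For horizontal curves meeting the vertical axis $\mathcal{V}$, the final detail is to invoke the last part of the Proposition in Section~\ref{sec:revring} (the analogue of Proposition~10 in \cite{BFP}): the lift $\tau\mapsto\xi(\tau)$ remains absolutely continuous and the integral identity above extends by continuity, so admissibility is preserved on the whole family.
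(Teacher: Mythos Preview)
Your argument for (1) is essentially the paper's: the same quasiradial family, the same H\"older step against the Jacobian weight, and the same direct computation of $\iiint\rho_0^4$.

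For (2) there is a genuine gap. You correctly reduce admissibility of $\rho_0$ to the inequality
\[
\int\left|\dot\xi(\tau)+\tfrac12\bigl((\log r)'(\beta(\tau))+i\bigr)\dot\beta(\tau)\right|\,d\tau\;\ge\;\log(b/a),\qquad r(\beta)=|\bp^*(\beta)|,
\]
which is exactly the integral the paper reaches; but you then stop, calling this ``the main obstacle'' and only indicating which ingredients (``the forced $\log(b/a)$-variation of $\xi$'' and the decomposition of $\Re\log\alpha$) the proof ``must exploit.'' That is a plan, not a proof. Your worry about the ``naive strip-width estimate'' in the $(u,v)$-plane is precisely the point: projecting the image curve $\log\alpha\circ\gamma$ onto the $u$-axis and reading off the endpoint difference gives $2\log(b/a)+\log\bigl(r(\beta(\ell))/r(\beta(0))\bigr)$, and the second summand can have either sign, so this alone does \emph{not} yield $\ge 2\log(b/a)$ without further input.

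The paper closes the gap in one stroke: choose the orientation of $\gamma$ so that $r(\beta(\ell))\ge r(\beta(0))$ and $\xi(0)=\log a$, $\xi(\ell)=\log b$, then bound the integrand pointwise by its real part,
\[
\left|\dot\xi+\tfrac12\bigl((\log r)'+i\bigr)\dot\beta\right|\;\ge\;\dot\xi+\tfrac12(\log r)'(\beta)\dot\beta,
\]
and integrate to obtain
\[
\bigl(\xi(\ell)-\xi(0)\bigr)+\tfrac12\log\frac{r(\beta(\ell))}{r(\beta(0))}\;=\;\log(b/a)+\text{(nonnegative)}\;\ge\;\log(b/a).
\]
This is the step you are missing. (If you are uneasy about whether both normalisations $r(\beta(\ell))\ge r(\beta(0))$ and $\xi(0)=\log a$, $\xi(\ell)=\log b$ can be imposed simultaneously, that is a fair point to scrutinise in the paper's write-up; but in any case you still need to supply \emph{some} argument at this spot, and at present you supply none.)
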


\medskip

\noindent{\it Proof of Theorem \ref{thm:main1}}.
The proof shall be given in steps. In the first step we prove (1); to do so, we introduce the family $\Gamma_r\subset \Gamma$ of quasiradials and prove that ${\rm Mod}(\Gamma_r)\ge\pi^2(\log(b/a))^{-3}$. In the second step which is the proof of (2), we prove that the density $\rho_0$ as in (\ref{eq:rho-0}) is admissible for the large family $\Gamma$; this concludes the proof.

\smallskip

\noindent{\it Step 1: Quasiradials and a lower bound.} 
We shall denote by $\Gamma_r$ the family of {\it quasiradial curves} lying in $R_{a,b}(\calS)$. This family comprises curves $\gamma_r$ that are defined as follows: For fixed $(\beta,\phi)\in(\pi/2,3\pi/2)$ $\times(0,2\pi)$, let 
$$
\gamma_r(\xi)=\left(e^{\xi+i(\phi+\tan\beta\cdot\xi}\Re^{1/2}(-\bp^*(\beta)),\;e^{2\xi}\Im(\bp^*(\beta))\right),\quad \xi\in[\log a, \log b].
$$
Observe that if $\gamma_r\in\Gamma_r$, then $\gamma_r=\Phi\circ\tilde\gamma_{\beta,\phi}$, where
$$
\tilde\gamma_{\beta,\phi}(\xi)=(\xi,\beta,\phi+\tan\beta\cdot\xi),\quad \xi\in [\log a,\log b].
$$
From (\ref{eq:horcurveg}) it follows that $\Gamma_r$ is a family of horizontal curves. Moreover, if $\gamma_r=(\gamma_{r,h},\gamma_{r,3})\in\Gamma_r$, then from (\ref{eq:horspeedg}) we find
$$
\left\|\dot\gamma_{r,h}(\xi)\right\|=\frac{e^\xi|\bp^{*}(\beta)|}{\Re^{1/2}(-\bp^{*}(\beta))}.
$$
For any admissible $\rho$ for $\Gamma_r$ and for every $\gamma_r=\Phi(\tilde\gamma_{\beta,\phi})$ in $\Gamma_r$ we have:
\begin{eqnarray*}
1\le \int_{\gamma_r}\rho ds^h%&=&\int_{\log a}^{\log b}\rho\left(\gamma_r(\xi)\right)\|(\dot\gamma_r)_h(\xi)\|d\xi\\
&=&\int_{\log a}^{\log b}\rho\left(\gamma_r(\xi)\right)\frac{|\bp^{*}(\beta)|}{\Re^{1/2}(-\bp^{*}(\beta))}e^\xi d\xi\\
&=&\int_{\log a}^{\log b}\rho\left(\gamma_r(\xi)\right)
\frac{|\bp^{*}(\beta)|^{1/2}}{\cos^{1/2}(\beta)}
e^\xi d\xi\\
%&=&\int_{\log a}^{\log b}\rho\left(\tilde\gamma_{\beta,\phi}(\xi)\right)e^\xi|\bp^{*}(\beta)|^{1/2}\cdot \frac{1}{\cos^{1/2}(\beta)}
%d\xi\\
&\le &\left(\int_{\log a}^{\log b}\rho^4\left(\gamma_r(\xi)\right)e^{4\xi}|\bp^{*}(\beta)|^2 d\xi\right)^{1/4}\cdot \frac{\left(\log(b/a)\right)^{3/4}}{\cos^{1/2}(\beta)},
\end{eqnarray*}
where for the last inequality we have used H\"older's inequality with exponent 4. By taking the last inequality to the fourth power we have
\begin{equation}\label{eq:int}
\left(\log(b/a)\right)^{-3}\cdot \cos^2\beta\le \int_{\log a}^{\log b}\rho^4\left(\gamma_r(\xi)\right)e^{4\xi}|\bp^{*}(\beta)|^2 d\xi.
\end{equation}
Therefore, by integrating (\ref{eq:int}) with respect to $\beta$ and $\phi$ we have
$$
\pi^2\left(\log(b/a)\right)^{-3}\le \iiint_B\rho^4\left(\Phi(\xi,\beta,\phi+\tan\beta\cdot\xi)\right)J_\Phi(\xi,\beta,\phi)d\cm^3(\xi,\beta,\phi),
$$
where $B=(\log a,\log b)\times(\pi/2,3\pi/2)\times(0,2\pi)$. Since the Jacobian determinant of the transformation $(\xi,\beta,\phi)\mapsto(\xi,\beta,\phi+\tan\beta\cdot\xi)$ of $B$ equals to 1, it follows that
\begin{eqnarray*}
\pi^2\left(\log(b/a)\right)^{-3}&\le&\iiint_B\rho^4\left(\Phi(\xi,\beta,\phi)\right)J_\Phi(\xi,\beta,\phi)d\cm^3(\xi,\beta,\phi)\\
&=&\iiint_{R_{a,b}(\calS)}\rho^4(z,t)d\cm^3(z,t).
\end{eqnarray*}
We take the infimum over all admissible $\rho$ of $\Gamma_r$ and use the inequality  ${\rm Mod}(\Gamma_r)\le{\rm Mod}(\Gamma)$  to obtain 
$$
\pi^2\left(\log(b/a)\right)^{-3}\le{\rm Mod}(\Gamma_r)\le{\rm Mod}(\Gamma).
$$
We next verify that the density $\rho_0$ as in (\ref{eq:rho-0}) is extremal. Note that $\rho=\tilde\rho_0\circ\Phi^{-1}$, where
$$
\tilde\rho_0(\xi,\beta,\phi)=\left(\log(b/a)\right)^{-1}e^{-\xi}|\bp^{*}(\beta)|^{-1}\Re^{1/2}\left(-\bp^{*}(\beta)\right)
\mathcal{X}(B).
$$
We have
\begin{eqnarray*}
\iiint_\h\rho_0^4 d\cm^3&=&\iiint_B(\tilde\rho_0)^4(\xi,\beta,\phi)J_\Phi(\xi,\beta,\phi)d\cm^3(\xi,\beta,\phi)
\\
&=&\left(\log(b/a)\right)^{-4}\int_{\log a}^{\log b}\int_{\pi/2}^{3\pi/2}\int_0^{2\pi}\frac{e^{4\xi}|\bp^{*}(\beta)|^2\Re^{2}\left(\bp^{*}(\beta)\right)}{e^{4\xi}|\bp^{*}(\beta)|^4}d\phi d\beta d\xi\\
&=&2\pi\left(\log(b/a)\right)^{-3}\int_{\pi/2}^{3\pi/2}\cos^2\beta d\beta\\
&=&\pi^2\left(\log(b/a)\right)^{-3}.
\end{eqnarray*}
\noindent {\it Step 2: Admissibility of $\rho_0$.}
To prove (2) we consider the density $\rho_0$ as above. 
The density $\rho_0$ is admissible for the family $\Gamma$: Pick any $\gamma=\Phi\circ\tilde\gamma\in\Gamma$ and suppose that
$$
\tilde\gamma(\tau)=\left(\xi(\tau),\beta(\tau),\phi(\tau)\right),
$$
where $\tau$ is the horizontal arc length parameter running through an interval $[0,\ell]$. We now write $\bp^*(\beta)=r(\beta)e^{i\beta}$, $\beta\in[\pi/2,3\pi/2]$, and we may assume that $r(\beta(\ell))\ge r(\beta(0))$; if the opposite inequality holds, we change the arc length parameter from $\tau$ to $\ell-\tau$.  Under this normalisation we may also assume that $\xi(0)=\log a$ and $\xi(\ell)=\log b$. Since $\gamma$ is of unit horizontal speed we have from Equation (\ref{eq:horspeedg}) that
\begin{eqnarray*}
e^{-\xi(\tau)}\Re^{1/2}\left(\bp^{*}(\beta(\tau))\right)&=&\left|\bp^{*}(\beta(\tau))\dot\xi(\tau)+\frac{1}{2}\frac{d\bp^{*}}{d\beta}(\beta(\tau))\dot \beta(\tau)\right|\\
&=&\left|r(\beta(\tau))e^{i\beta(\tau)}\dot\xi(\tau)+\frac{1}{2}\left(\frac{dr}{d\beta}(\beta(\tau))e^{i\beta(\tau)}+ir(\beta(\tau))e^{i\beta(\tau)}\right)\dot\beta(\tau)\right|\\
&=&\left|r(\beta(\tau))\dot\xi(\tau)+\frac{1}{2}\left(\frac{dr}{d\beta}(\beta(\tau))+ir(\beta(\tau))\right)\dot\beta(\tau)\right|.
\end{eqnarray*}
Therefore
\begin{eqnarray*}
\int_{\gamma}\rho_0ds^h&=&\left(\log(b/a)\right)^{-1}\int_0^\ell\left|\dot\xi(\tau)+\frac{1}{2}
\left(\frac{(dr/d\beta)(\beta(\tau))}{r(\beta(\tau))}+i\right)\dot \beta(\tau)\right|d\tau\\
&\ge &\left(\log(b/a)\right)^{-1}\left(\int_0^\ell\dot\xi(\tau)d\tau+\frac{1}{2}\int_0^\ell\frac{(dr/d\beta)(\beta(\tau))}{r(\beta(\tau))}\dot \beta(\tau)d\tau\right).
\end{eqnarray*}
But
$
\int_0^\ell\dot\xi(\tau)d\tau=\log(b/a),
$
and
\begin{equation*}
\frac{1}{2}\int_0^\ell\frac{(dr/d\beta)(\beta(\tau))}{r(\beta(\tau))}\dot \beta(\tau)d\tau
=\frac{1}{2}\log\frac{r\left(\beta(\ell)\right)}{r\left(\beta(0)\right)}\ge 0.%=-\frac{1}{2}x%\log\frac{|\bp^*(3\pi/2)|}{|\bp^*(\pi/2)|}
%+\frac{i\pi}{2},
\end{equation*}
%where $x=\log(|\bp^*(\pi/2)|/|\bp^*(3\pi/2)|)$. 
Thus
$$
\int_{\gamma}\rho_0 ds^h\ge 1, %\left(\log(b/a)\right)^{-1}\left(\left(\log(b/a)-\frac{x}{2}%+\frac{1}{2}\log\frac{|\bp^*(3\pi/2)|}{|\bp^*(\pi/2)|}
%\right)^2+\frac{\pi^2}{4}\right)^{1/2},
$$
%and therefore $\int_\gamma\rho_0 ds^h\ge 1$ if
%$
%p(x)=x^2-4\log(b/a)x+\pi^2\ge 0.
%$
%$$
%\log^2\frac{|\bp^*(3\pi/2)|}{|\bp^*(\pi/2)|}+4\log(b/a)\log\frac{|\bp^*(3\pi/2)|}{|\bp^*(\pi/2)|}+\pi^2\ge 0.
%$$
%Notice first that the above inequality holds if $x\le 0$, since then its left hand side is strictly positive. This includes the case where $\calS$ is symmetric with respect to the complex plane $\C$: $|\bp^*(3\pi/2)|$ $=|\bp^*(\pi/2)|$. Secondly, suppose that $x>0$ and let $D=16\log^2(b/a)-4\pi^2$ be the discriminant of $p$. Then $p(x)\ge 0$ for $x>0$, if and only if:
%\begin{enumerate}
% \item [{i)}] $D\le 0$, i.e.,  $\log(b/a)\le\pi/2$, or,
% \item [{ii)}] $D>0$, that is, $\log(b/a)>\pi/2$ and 
% $$x\ge 2\log(b/a)+(4\log^2(b/a)-\pi^2)^{1/2},\quad \text{or}\quad x\le 2\log(b/a)-(4\log^2(b/a)-\pi^2)^{1/2}.$$
 %\end{enumerate}
%We conclude that for all the above cases 
and $\rho_0$ is admissible for $\Gamma$. 
The proof of Theorem \ref{thm:main1} is complete.
\qed

\subsection{A note on quasiradials}\label{sec:note}
The revolution ring $R_{a,b}(\calS)$ is foliated by dilated images of $\calS$. The following proposition justifies the name of the elements of $\Gamma_r$, that is, they are curves whose horizontal tangent at each point $p$ of the revolution ring $R_{a,b}(\calS)$ is almost parallel to the horizontal normal at $p$ to the leaf $\calS_p$ containing $p$. Moreover, quasiradials are radials if and only if $\calS$ is a Kor\'anyi sphere.
\begin{prop}\label{prop:Gr}
Let $p=\Phi(\xi,\beta,\phi)$ in $R_{a,b}({\bf \calS})$. Then the horizontal tangent $(\dot\gamma_{r,h})_p$ to the quasiradial $\gamma_r$ passing from $p$ and the horizontal normal $N^h_p$ of the surface of revolution inside  $R_{a,b}({\bf \calS})$ containing $p$ make an angle
$$
\theta_p=\frac{\pi}{2}-\arg(\dot\bp^{*}(\beta))+\beta,
$$
at $p$. Therefore $(\dot\gamma_{r,h})_p$ and $N^h_{p}$ are never orthogonal and they are parallel if and only if $R_{a,b}(\calS)$ is a Kor\'anyi ring.
\end{prop}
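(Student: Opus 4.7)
\textit{Proof plan.} The idea is to identify ${\rm H}_p(\h)$ with $\C$ via $aX_p+bY_p\leftrightarrow a+ib$, so that the angle between two horizontal vectors at $p$ becomes the difference of their complex arguments. The proposition then reduces to computing these two arguments.

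At $p=\Phi(\xi,\beta,\phi)$, write $z_p=e^{\xi+i\phi}\Re^{1/2}(-\bp^*(\beta))$ for the complex coordinate of $p$. Differentiating the explicit formula for $\gamma_r$ in its $\xi$-parameter at $p$ gives, for the horizontal tangent, $\dot z=(1+i\tan\beta)\,z_p$. The algebraic identity $1+i\tan\beta=e^{i\beta}/\cos\beta$ together with $\cos\beta<0$ on $(\pi/2,3\pi/2)$ yields $\arg(1+i\tan\beta)\equiv \beta+\pi\pmod{2\pi}$, hence $\arg(\dot\gamma_{r,h})_p\equiv \beta+\pi+\phi$. For $N^h_p$, the leaf through $p$ of the natural foliation of $R_{a,b}(\calS)$ is the dilated surface $D_{e^\xi}(\calS)$, itself a surface of revolution whose Kor\'anyi-image of the profile curve is $e^{2\xi}\bp^*(\beta)$. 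Applying formula (\ref{eq:hornorsurf}) to this rescaled profile shows that $N^h_p$ is represented in $\C$ by a positive multiple of $i\,e^{i\phi}\dot\bp^*(\beta)$, so $\arg(N^h_p)\equiv \pi/2+\phi+\arg(\dot\bp^*(\beta))$. Subtracting gives
\begin{equation*}
\theta_p\equiv(\beta+\pi+\phi)-\bigl(\tfrac{\pi}{2}+\phi+\arg(\dot\bp^*(\beta))\bigr)\equiv \tfrac{\pi}{2}+\beta-\arg(\dot\bp^*(\beta))\pmod{2\pi},
\end{equation*}
which is the stated formula.

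For the geometric conclusions I write $\bp^*(\beta)=r(\beta)e^{i\beta}$ with $r=|\bp^*|>0$, so that $\dot\bp^*(\beta)=(\dot r+ir)e^{i\beta}$ and $\theta_p\equiv \pi/2-\arg(\dot r(\beta)+ir(\beta))$. Orthogonality would force $\arg(\dot r+ir)\in\{0,\pi\}$, hence $r(\beta)=0$, which contradicts $r>0$; so the two vectors are never orthogonal. Parallelism at every point of $R_{a,b}(\calS)$ forces $\arg(\dot r+ir)\equiv \pi/2$, i.e.\ $\dot r\equiv 0$, so $r$ is constant and $\bp^*$ is a semicircle centred at the origin; thus $\calS$ is a Kor\'anyi sphere by Example~1 of Section~\ref{sec:examples}. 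Conversely, in the Kor\'anyi case $\dot\bp^*=iR^2 e^{i\beta}$ and $\theta_p\equiv 0$ follows directly. The only mildly delicate point is the identification of $N^h$ for the \emph{leaf} through $p$ rather than for $\calS$ itself, but this is immediate since dilation multiplies $\bp^*$ by a positive scalar and therefore leaves the relevant arguments unchanged.
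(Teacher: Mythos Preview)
Your argument is correct and follows essentially the same path as the paper: compute $(\dot\gamma_{r,h})_p$ and $N^h_p$ explicitly from the definitions (the paper writes them out in the $X_p,Y_p$ basis and takes the inner product, whereas you identify ${\rm H}_p(\h)\cong\C$ and subtract arguments), then read off the angle and analyse the orthogonality/parallelism conditions via $\bp^*(\beta)=r(\beta)e^{i\beta}$. The paper invokes Condition~(\ref{eq:betacond}) for non-orthogonality where you use $r>0$ directly, but these are equivalent once $\bp^*$ is parametrised by $\beta$, so there is no substantive difference.
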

\begin{proof}
Let $\tilde\gamma_{\beta,\phi-\tan\beta\cdot\xi}(\tau)=(\tau,\beta,\phi+\tan\beta\cdot(\tau-\xi))$, $\tau\in[\log a,\log b]$, $\gamma_r=\Phi\circ\tilde\gamma_{\beta,\phi}$ and $\gamma_r(\xi)=p$. The horizontal tangent to $\gamma_r$ at $p$ is
$$
(\dot\gamma_{r,h})_p=-e^\xi|\bp^*(\beta)|^{1/2}\Re^{-1/2}(-\bp^*(\beta))\cdot\left(\cos(\phi+\beta)X_p+\sin(\phi+\beta)Y_p\right).
$$
 Consider the leaf $\calS_\xi=D_{e^{\xi}}(\calS)$ with surface patch $\sigma_\xi(\cdot,\cdot)=\Phi(\xi,\cdot,\cdot)$. Then $p\in\calS_\xi$ and from (\ref{eq:hornorsurf}) we find that its horizontal normal vector of $\calS_\xi$ at $p$ is
 $$
 N_{p}^h=|\dot\bp^{*}(\beta)|\Re^{1/2}(-\bp^*(\beta))\cdot\left(-\sin(\phi+\arg(\dot\bp^{*}(\beta))X_p+\cos(\phi+\arg(\dot\bp^{*}(\beta))Y_p\right). 
 $$
 We have
 $$
 \langle (\dot\gamma_{r,h})_p, N_{p}^h\rangle=e^\xi|\bp^*(\beta)|^{1/2}|\dot\bp^*(\beta)|\sin\left(\arg(\dot\bp^{*}(\beta)-\beta)\right)
 \neq 0,
 $$
 according to (\ref{eq:betacond}); hence $(\dot\gamma_{r,h})_p$ and $N_{p}^h$ are not orthogonal. Since 
$$
\|(\dot\gamma_{r,h})_p\|=e^\xi|\bp^*(\beta)|^{1/2}\Re^{-1/2}(-\bp^*(\beta))
\;\;\text{and}\;\;
\|N_{p}^h\|=\Re^{1/2}(-\bp^*(\beta)|\dot\bp^{*}(\beta)|,
$$
we obtain
$
\cos\theta_p=\sin\left(\arg(\dot\bp^{*}(\beta))-\beta\right)
$
and our formula for $\theta_p$ follows. 

Finally, $(\dot\gamma_{r,h})_p$ and $N_{p}^h$ are parallel if and only if
$
\arg(\dot\bp^{*}(\beta))=\beta\pm\pi/2.
$
But then $
|\bp^*(\beta)|=\lambda,
$
for some $\lambda>0$, that is,  $\bp^{*}(\beta)=(\alpha\circ\bp)(\beta)$ is a circle in the left half plane $\calL$ centred at the origin. This happens if and only if $\bp$ is a curve lying on a Kor\'anyi sphere and the proof is complete. 
 \end{proof}

 \end{document}